  \newcommand\bmu{\bm u}
    \newcommand\bmI{\bm I}
    \newcommand\bmt{\bm t}
      \newcommand\bmn{\bm n}
    \newcommand\bmg{ g}
  \newcommand\A{\bm A}
    \newcommand\D{\bm D}
   \newcommand\oscf{{\rm osc}(\bm f)}
    \newcommand\osckf{{\rm osc}_{K}(\bm f)}
       \newcommand\osct{{\rm osc}(\bm t)}
    \newcommand\oscet{{\rm osc}_{E}(\bm t)}
    \newcommand\bmv{\bm v}
      \newcommand\bmw{\bm w}
            \newcommand\bmf{\bm f}
      \newcommand\Ch{{\mathcal C}_h}
        \newcommand\Eh{{\mathcal E}_h}
            \newcommand\Sh{{\mathcal S}_h}
      \newcommand\V{{\bm H}_{D}^{1}(\Omega)}
      \newcommand\Vh{\bm V_{h}}
       \newcommand\Ph{P_{h}}
       \newcommand\intv{\tilde{\bmv}}
       \renewcommand\P{L^{2}(\Omega) }
      \newcommand\Fh{{\mathcal F}_h}
           \newcommand\Lh{{\mathcal L}_h}
           \newcommand\B{{\mathcal B}}
                \newcommand\F{{\mathcal F}}
      \renewcommand\div{{\rm div}\,}
      \newtheorem{remark}[theorem]{Remark}
\title{ON THE ERROR ANALYSIS OF STABILIZED FINITE ELEMENT METHODS FOR THE STOKES PROBLEM}
\author{Rolf Stenberg\thanks{Department of Mathematics and Systems Analysis,
Aalto University, P.O. Box 11100,  \hfill \break00076 Aalto, Finland
e-mail: 
(\email{rolf.stenberg@aalto.fi}). }  
 \and {Juha Videman}\thanks{CAMGSD/Departamento de Matem\'atica, Instituto Superior T\'ecnico, Universidade   de Lisboa, Av. Rovisco Pais 1, 1049-001 Lisboa, Portugal (\email{videman@math.ist.utl.pt}).}  
}
\begin{document}

\maketitle
\slugger{mms}{xxxx}{xx}{x}{x--x}

\begin{abstract} For a family of stabilized mixed finite element methods for the Stokes equations a complete a priori and a posteriori error analysis is given.\end{abstract}

\begin{keywords}Stabilized finite element methods, Galerkin least squares methods, Stokes problem, incompressible elasticity, a priori error estimates, a posteriori error estimates\end{keywords}

\begin{AMS}65N30\end{AMS}

\pagestyle{myheadings}
\thispagestyle{plain}

\section{Introduction} 
 Stabilization of mixed finite element methods for saddle point problems  \cite{HFB,Hughes-Franca,FH88,Douglas-Wang} is by now a well-established technique to design stable methods with finite element spaces which do not have to satisfy the so-called Babu$\check{\rm s}$ka--Brezzi condition. 
 The idea is to add a properly weighted residual of the momentum balance equation to the variational bilinear form.
   This resembles the least squares method and hence the formulation has often been called  the Galerkin least squares method.  This is, however, a somewhat misleading name since the formulation does not lead to a minimization problem.
 
 In the paper by Franca and Stenberg \cite{SIAM} a unified stability and error analysis for this class of methods was given. The error estimates were obtained under the assumption that the solution is regular enough, and so far a general analysis has been missing. 
 
 The purpose of this paper is to address this question. We will show that using 
 a technique proposed recently by Gudi 
 \cite{Gudi}  it is possible to derive quasi-optimal a priori estimates. This technique uses estimates  known from a posteriori error analysis.

 In addition to the a priori analysis, we discuss a posteriori estimates. Since the added stabilization term is exactly a weighted residual, the a posteriori analysis is very straightforward. Similar a posteriori estimates were given in [18]. Our analysis seems, however, more natural.

  The plan of the paper is as follows. In the next  sections we recall the continuous Stokes problem and its discretization by stabilized finite element methods. Then we present the new a priori analysis. We end by deriving the a posteriori error estimates. We will use well established  notation. In addition, we will use the shorthand  notation $A\lesssim B$   for: there exist a positive constant $C$, independent of the mesh parameter $h$, such that $A\leq CB$.

 \section{The Stokes problem}
 
We consider the Stokes equations for slow (or very viscous) steady fluid flow or equivalently, the equations of incompressible elasticity, which we normalize in such a way that $2\mu=1$, where $\mu$ is the  dynamic viscosity or first Lam\'e parameter, respectively. Let $\mathbf {div}$ denote the vector valued divergence applied to tensors and denote the symmetric velocity gradient/strain tensor by 
\begin{equation}
\D(\bmv)=\frac{1}{2}\big( \nabla \bmv +\nabla \bmv^{T}\big).
 \end{equation}
 Introducing the second order differential operator
\begin{equation}
\A\bmv =\mathbf {div}\D(\bmv), 
\end{equation}
the problem is: given $(\bmf, \bmt,g)$, find $(\bmu, p)$ such that
\begin{eqnarray} \label{momeq}
- \A \bmu+\nabla p&=& \bmf \quad \mbox{ in } \Omega,
\\
\div \bmu&=&\bmg \quad \mbox{ in } \Omega,
\\  \bmu&=&\bm0 \quad \mbox{ on }  \Gamma_{D},
\\ \big(\D(\bmu)-p\bmI\big)\bmn&=& \bmt\quad \mbox{ on }  \Gamma_{N} .
\end{eqnarray}
The domain $\Omega\subset I\!\!R^{d}, \ d=2,3, $ is assumed bounded and with a polygonal or polyhedral boundary. With the bilinear form
\begin{equation}\label{defB}
\B(\bmw,r;\bmv,q)=(\D (\bmw), \D( \bmv))-(\div \bmv, r)-(\div \bmw,q),
\end{equation}
and the linear form
\begin{equation}\label{defF}
\F( \bmv,q)=(\bmf,\bmv)+\langle \bmt,\bmv\rangle_{\Gamma_{N}}-(\bmg,q),
\end{equation}
we define the variational formulation.

\smallskip
\noindent 
\textsc{The continuous problem.} {\em 
 Find $(\bmu,p)\in \V\times \P$ such that }
 \begin{equation}\label{contprob}
\B(\bmu,p;\bmv,q)= \F(\bmv,q)\quad \forall(\bmv,q)\in \V\times \P.
 \end{equation}
Here,
$\V= {\bm H}^{1}(\Omega)\cap \{ \, \bmv \, \vert \, \bmv_{\vert \Gamma_{D}}=\bm 0\, \}.$
 
 The stability of this is a consequence of Korn's inequality
\begin{equation}\label{korn} C 
\Vert  \bm v\Vert_{1} \leq \Vert \D (\bm v)\Vert_{0}\leq \Vert  \bm v\Vert_{1}
\end{equation}
and the 
condition
\begin{equation}\label{clbb}
\sup_{\bmv \in \V}
 \frac{(\div \bmv, q)}{\Vert \bmv \Vert_{1}} \gtrsim   \Vert q\Vert_{0} \quad \forall q \in \P.
\end{equation}
Together they imply the stability:
 \begin{theorem} It holds
\begin{eqnarray}\label{contstab}
\sup_{(\bmv,q) \in \V\times \P}
\frac{\B(\bmw,r;\bmv,q)}{\Vert \bmv \Vert_{1}+\Vert q\Vert_{0}}&\gtrsim &(\Vert \bmw \Vert_{1}+\Vert r\Vert_{0})
\\ 
& &\forall( \bmw,r) \in \V\times \P. \nonumber
\end{eqnarray}
\end{theorem}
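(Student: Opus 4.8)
The plan is to establish the inf-sup bound \eqref{contstab} by the classical saddle-point argument: for a given pair $(\bmw,r)\in\V\times\P$ I will exhibit an explicit test pair $(\bmv,q)$ whose norm is controlled by $\Vert\bmw\Vert_{1}+\Vert r\Vert_{0}$ and for which $\B(\bmw,r;\bmv,q)$ is bounded below by a constant times $(\Vert\bmw\Vert_{1}+\Vert r\Vert_{0})^{2}$. Dividing then yields the claim directly.

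First I would test with the diagonal choice $(\bmv,q)=(\bmw,-r)$. The two divergence terms cancel, leaving $\B(\bmw,r;\bmw,-r)=\Vert\D(\bmw)\Vert_{0}^{2}$, which by Korn's inequality \eqref{korn} is bounded below by $C\Vert\bmw\Vert_{1}^{2}$. This controls the velocity contribution but gives no information on the pressure $r$, so a second test field is needed.

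To recover the pressure I would invoke the inf-sup condition \eqref{clbb}. For the given $r$ it furnishes a field $\bm z\in\V$ which, after rescaling, may be normalized so that $\Vert\bm z\Vert_{1}=\Vert r\Vert_{0}$ and $(\div\bm z,r)\gtrsim\Vert r\Vert_{0}^{2}$. Testing with $(-\bm z,0)$ and using Cauchy--Schwarz together with the upper bound in \eqref{korn} gives $\B(\bmw,r;-\bm z,0)=-(\D(\bmw),\D(\bm z))+(\div\bm z,r)\geq -\Vert\bmw\Vert_{1}\Vert r\Vert_{0}+c\Vert r\Vert_{0}^{2}$.

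Finally I would combine the two test functions, setting $(\bmv,q)=(\bmw-\delta\bm z,-r)$ for a small parameter $\delta>0$ to be fixed. By bilinearity the value splits as $\Vert\D(\bmw)\Vert_{0}^{2}+\delta\bigl(-(\D(\bmw),\D(\bm z))+(\div\bm z,r)\bigr)$; absorbing the indefinite cross term $\delta\Vert\bmw\Vert_{1}\Vert r\Vert_{0}$ by Young's inequality and choosing $\delta$ small enough, depending only on the Korn and inf-sup constants, yields a lower bound $\gtrsim\Vert\bmw\Vert_{1}^{2}+\Vert r\Vert_{0}^{2}$. Since $\Vert\bmv\Vert_{1}+\Vert q\Vert_{0}\leq\Vert\bmw\Vert_{1}+\delta\Vert\bm z\Vert_{1}+\Vert r\Vert_{0}\lesssim\Vert\bmw\Vert_{1}+\Vert r\Vert_{0}$, the quotient in \eqref{contstab} is bounded below by $\Vert\bmw\Vert_{1}+\Vert r\Vert_{0}$. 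The only delicate point is the pressure estimate: everything hinges on the condition \eqref{clbb} and on balancing the velocity--pressure coupling through the right choice of $\delta$, so that the sign-indefinite cross term does not spoil the coercivity obtained from Korn's inequality.
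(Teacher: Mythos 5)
Your proof is correct and follows exactly the route the paper indicates: the paper states the theorem without a written proof, asserting only that it follows from Korn's inequality \eqref{korn} and the inf-sup condition \eqref{clbb}, and your argument (diagonal test pair $(\bmw,-r)$ for velocity coercivity, an inf-sup supremizer $\bm z$ for the pressure, and the combination $(\bmw-\delta\bm z,-r)$ with a small $\delta$ fixed via Young's inequality) is precisely the standard way to carry out that implication. This is also the same ``trick'' of Pitk\"aranta and Verf\"urth that the paper later invokes for the discrete stability, so your proposal matches the intended argument.
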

Classical mixed finite element methods are based on the variational formulation above posed in the finite element subspaces. By analogy with the continuous problem, the discrete spaces have to satisfy the  Babu$\check{\rm s}$ka--Brezzi condition, i.e. the discrete counterpart of \eqref{clbb}. The recent monograph \cite{bbf} contains the state of the art information on stable velocity--pressure pairs.

\section{Stabilized methods} We denote the piecewise polynomial finite element subspaces for the velocity and pressure  by $\Vh\subset \V$ 
and $\Ph\subset \P$, respectively. The underlying mesh is denoted by $\Ch$. As usual, the diameter of an element $K\in\Ch$, is denoted by $h_K$. Next, we define the bilinear and linear forms
 \begin{equation}\label{defsh}
\Sh(\bmw,r;\bmv,q)=\sum_{K\in \Ch}h_{K}^{2}(  - \A \bmw+\nabla r, - \A \bmv+\nabla q)_{K},\end{equation}
\begin{equation}\label{deflh}
\Lh(\bmv,q)= \sum_{K\in \Ch}h_{K}^{2}(  \bmf, - \A \bmv+\nabla q)_{K}.
\end{equation}
From the differential equation \eqref{momeq} it follows.
\begin{lemma} \label{s=l}If $\bmf \in \bm L_{2}(\Omega)$ it holds
\begin{equation}
\Sh(\bmu,p;\bmv,q)= \Lh(\bmv,q) \quad \forall (\bmv,q)\in \Vh\times \Ph.
\end{equation} 
\end{lemma}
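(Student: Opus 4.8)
The plan is to recognize this identity as an essentially immediate consequence of the strong form of the momentum equation \eqref{momeq}. The whole content of the lemma reduces to substituting the exact solution into the first argument of $\Sh$ and invoking $-\A\bmu+\nabla p=\bmf$; the hypothesis $\bmf\in\bm L_{2}(\Omega)$ is needed only to give this substitution a rigorous meaning.

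First I would insert $(\bmw,r)=(\bmu,p)$ into the definition \eqref{defsh}, obtaining on each element $K\in\Ch$ the inner product $(-\A\bmu+\nabla p,\,-\A\bmv+\nabla q)_{K}$. Since $\bmv$ and $q$ are piecewise polynomials, the second argument $-\A\bmv+\nabla q=-\mathbf{div}\,\D(\bmv)+\nabla q$ is computed elementwise and plainly belongs to $\bm L_{2}(K)$, so it poses no difficulty. For the first argument I would use that the momentum equation $-\A\bmu+\nabla p=\bmf$, which the variational formulation \eqref{contprob} guarantees a priori only in the distributional sense, in fact holds in $\bm L_{2}(\Omega)$ once $\bmf\in\bm L_{2}(\Omega)$: the distribution $-\A\bmu+\nabla p$ is then identified with the $\bm L_{2}$ function $\bmf$, so the equality holds pointwise almost everywhere, in particular on each $K$. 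Replacing the first argument by $\bmf$ in every term then gives
\begin{equation*}
\Sh(\bmu,p;\bmv,q)=\sum_{K\in\Ch}h_{K}^{2}(\bmf,\,-\A\bmv+\nabla q)_{K},
\end{equation*}
which is precisely $\Lh(\bmv,q)$ by \eqref{deflh}.

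The only point that genuinely requires care, and hence the sole (if minor) obstacle, is the regularity argument just described: although neither $\A\bmu$ nor $\nabla p$ need individually lie in $\bm L_{2}$ for a merely $\V\times\P$ solution, their combination does, precisely by virtue of the equation together with the assumption on $\bmf$. Once this is granted, the identity is immediate and holds for every test pair $(\bmv,q)\in\Vh\times\Ph$.
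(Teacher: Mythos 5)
Your argument is correct and is essentially the paper's own proof, just spelled out in more detail: both rest on the observation that, although $\A\bmu$ and $\nabla p$ need not individually be in $\bm L_2$, the combination $-\A\bmu+\nabla p$ is identified with $\bmf\in\bm L_{2}(\Omega)$ via the distributional momentum equation, after which the substitution into \eqref{defsh} immediately yields \eqref{deflh}. No gaps.
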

\begin{proof} The differential equation \eqref{momeq} has to be interpreted in the sense of distributions. However, with the assumption $\bmf \in \bm L_{2}(\Omega)$ the sum $   - \A \bmv+\nabla q$ is in $  \bm L_{2}(\Omega)$ and hence both   
$
\Sh(\bmu,p;\bmv,q)$
 and  
$
 \Lh(\bmv,q)  $ are well defined and equal.
\end{proof}

Next, we define the forms
\begin{equation}\label{defbh}
\B_{h}(\bmw,r;\bmv,q)= \B(\bmw,r;\bmv,q)-\alpha \Sh(\bmw,r;\bmv,q)
\end{equation}
and
\begin{equation}\label{deffh}
\Fh(\bmv,q)=\F(\bmv,q)-\alpha\Lh(\bmv,q),
\end{equation}
where $\alpha$ is a positive constant less than   the constant $C_{I}$ in the following inverse inequality, which is valid in piecewise polynomial spaces with shape regular elements:
 \begin{equation}\label{inveq}
C_{I}\sum_{K\in\Ch}h_{K}^{2}\Vert \A\bmv\Vert_{0,K}^{2}\leq \Vert \D(\bmv)\Vert_{0}^{2}.
\end{equation}
The stabilized formulation is then the following.

 \textsc{The Finite element method. }{\em
 Find $(\bmu_h,p_h)\in \Vh\times \Ph$ such that }
 \begin{equation}\label{discprob}
\B_{h}(\bmu_{h},p_{h};\bmv,q)=\Fh(\bmv,q)\quad \forall(\bmv,q)\in \Vh\times \Ph.
 \end{equation}
 
 The consistency follows from Lemma \ref{s=l}. 
 
 \begin{theorem} Suppose that  $\bmf \in \bm L_{2}(\Omega)$.
 Then finite element method is consistent, in the sense that the exact solution $(\bmu,p)\in \V\times \P$  to \eqref{contprob} satisfies the discrete variational form
  \begin{equation}\label{consistency}
\B_{h}(\bmu,p;\bmv,q)=\Fh(\bmv,q)\quad \forall(\bmv,q)\in \Vh\times \Ph.
 \end{equation}
 \end{theorem}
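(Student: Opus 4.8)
The plan is to reduce the discrete consistency identity \eqref{consistency} to two facts that are already available, and then recombine them through the definitions \eqref{defbh} of $\B_h$ and \eqref{deffh} of $\Fh$. Expanding those definitions, \eqref{consistency} is equivalent to requiring
\[
\B(\bmu,p;\bmv,q)-\alpha\,\Sh(\bmu,p;\bmv,q)=\F(\bmv,q)-\alpha\,\Lh(\bmv,q)
\]
for every $(\bmv,q)\in\Vh\times\Ph$. Hence it suffices to prove the two separate identities $\B(\bmu,p;\bmv,q)=\F(\bmv,q)$ and $\Sh(\bmu,p;\bmv,q)=\Lh(\bmv,q)$ on the discrete space, and then subtract $\alpha$ times the second from the first.

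For the Galerkin part I would simply appeal to conformity. Since $\Vh\subset\V$ and $\Ph\subset\P$, any discrete pair $(\bmv,q)\in\Vh\times\Ph$ is an admissible test pair in the continuous problem \eqref{contprob}, which therefore yields $\B(\bmu,p;\bmv,q)=\F(\bmv,q)$ at once, with no additional work. For the stabilization part, the identity $\Sh(\bmu,p;\bmv,q)=\Lh(\bmv,q)$ is exactly the statement of Lemma \ref{s=l}, which I may invoke verbatim; note that it, like the present theorem, is predicated on $\bmf\in\bm L_{2}(\Omega)$.

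The only genuine subtlety---and the reason the hypothesis $\bmf\in\bm L_{2}(\Omega)$ is imposed---is a question of well-definedness rather than an analytic difficulty. The form $\Sh$ is built from the elementwise strong residual $-\A\bmu+\nabla p$, and for a generic $\V\times\P$ solution this residual is only guaranteed to lie in a negative-order (distributional) space, so $\Sh(\bmu,p;\bmv,q)$ need not \emph{a priori} be finite. This is precisely what the $L_{2}$-regularity of $\bmf$ repairs: the momentum equation \eqref{momeq} identifies $-\A\bmu+\nabla p$ with $\bmf$ in the distributional sense, so the residual is square-integrable and every term in \eqref{consistency} is meaningful. Once this is secured, the two identities combine with no loss and give the claim immediately. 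I therefore expect the proof to be very short, the ``hard part'' being entirely absorbed into the already-proved Lemma \ref{s=l}.
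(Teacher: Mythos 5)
Your proposal is correct and is exactly the argument the paper intends: the paper disposes of the theorem with the single remark that ``the consistency follows from Lemma \ref{s=l}'', i.e.\ the Galerkin identity $\B(\bmu,p;\bmv,q)=\F(\bmv,q)$ holds by conformity of $\Vh\times\Ph$ and the stabilization identity is Lemma \ref{s=l}, after which one subtracts $\alpha$ times the latter from the former. Your additional remarks on why $\bmf\in\bm L_{2}(\Omega)$ is needed for well-definedness simply make explicit what the paper already records in the proof of Lemma \ref{s=l}.
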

 Next, we outline the main steps for analyzing   the stability of the formulation. For $(\bmv,q)\in \Vh\times \Ph$, the inverse inequality \eqref{inveq} and the assumption $0<\alpha < C_{I} $ give
\begin{eqnarray}\label{discstab1}\nonumber
\B_{h}(\bmw,r;\bmw,-r)&=&\Vert \D(\bmw)\Vert_{0}^{2}- \alpha
\sum_{K\in \Ch}h_{K}^{2} \Vert  \A \bmw\Vert^{2}_{0,K}
+\alpha
\sum_{K\in \Ch}h_{K}^{2} \Vert \nabla r\Vert^{2}_{0,K}
\\&\geq&
 \big(1-\frac{\alpha }{ C_{I}}\big)\Vert \D(\bmw)\Vert_{0}^{2} 
+\alpha
\sum_{K\in \Ch}h_{K}^{2} \Vert \nabla r\Vert^{2}_{0,K}
\\&\gtrsim&
 \big( \Vert \D(\bmw)\Vert_{0}^{2} 
+ 
\sum_{K\in \Ch}h_{K}^{2} \Vert \nabla r\Vert^{2}_{0,K}\big).\nonumber
\end{eqnarray}
As for the continuous problem the  stability for the velocity follows from  Korn's inequality \eqref{korn},
 whereas the stability of the pressure is  in the mesh dependent semi-norm \begin{equation}
\Big( \sum_{K\in \Ch}h_{K}^{2} \Vert \nabla r\Vert^{2}_{0,K}\Big)^{1/2}
 \end{equation}
 as a consequence of the added stabilization term. This gives stability for all pressures except the piecewise constants. In the case of continuous pressure approximations, all modes except the globally constant function 
  are stabilized. For discontinuous pressures the stable subspace is that of pressures orthogonal to the space of piecewise constants denoted by 
\begin{equation}
\Ph^{0}=\{ \ q\in \P \, \vert \ q\vert _{K}\in P_{0}(K) \ \forall K\in  \Ch\, \}.
\end{equation}The stabilization term has no influence on $P_{h}^{0}$. Hence the stability has to be based on the original bilinear form $\B$, i.e. we have to assume that the following discrete stability inequality is valid:
  \begin{equation}\label{conststab}
\sup_{\bmv \in \Vh}
\frac{(\div \bmv, q)}{\Vert \bmv \Vert_{1}}\gtrsim \Vert q\Vert_{0} \quad \forall q \in \Ph^{0}.
\end{equation}
The final stability estimate with the $L_{2}$-norm for the pressure is then proved using \eqref{discstab1} and 
\eqref{conststab} and a ``trick'', first introduced by Pitk\"aranta \cite{JPtrick}, and later applied for the Stokes problem by 
Verf\"urth \cite{RVtrick}. Our stability theorem is formulated as follows.

 \begin{theorem} Suppose that one of the following conditions is valid:
\begin{eqnarray}\nonumber
{\rm (i)}\quad && \Ph\subset {\mathcal C}^{0}(\Omega), 
\\
{\rm (ii)} \quad &&\mbox{the stability inequality \eqref {conststab} is valid.}
\nonumber
\end{eqnarray}
For 
$0< \alpha < C_{I}$  it then holds\begin{equation}\label{discstab2}
\sup_{(\bmv,q) \in \Vh\times \Ph}
\frac{\B_{h}(\bmw,r;\bmv,q)}{\Vert \bmv \Vert_{1}+\Vert q\Vert_{0}}\gtrsim  (\Vert \bmw \Vert_{1}+\Vert r\Vert_{0})\quad \forall( \bmw,r) \in \Vh\times \Ph.
\end{equation}
\end{theorem}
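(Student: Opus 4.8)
The plan is to prove the discrete inf--sup bound \eqref{discstab2} by the Pitk\"aranta--Verf\"urth device: given $(\bmw,r)\in\Vh\times\Ph$, I will construct a single test pair $(\bmv,q)\in\Vh\times\Ph$ with $\B_{h}(\bmw,r;\bmv,q)\gtrsim(\Vert\bmw\Vert_{1}+\Vert r\Vert_{0})^{2}$ and $\Vert\bmv\Vert_{1}+\Vert q\Vert_{0}\lesssim\Vert\bmw\Vert_{1}+\Vert r\Vert_{0}$, from which \eqref{discstab2} follows by division. The starting point is the ``diagonal'' choice $(\bmw,-r)$, for which \eqref{discstab1} and Korn's inequality \eqref{korn} already yield
\[
\B_{h}(\bmw,r;\bmw,-r)\gtrsim\Vert\bmw\Vert_{1}^{2}+|r|_{h}^{2},\qquad |r|_{h}:=\Big(\sum_{K\in\Ch}h_{K}^{2}\Vert\nabla r\Vert_{0,K}^{2}\Big)^{1/2}.
\]
Hence the velocity and the mesh-weighted pressure seminorm $|r|_{h}$ are already controlled, and the entire difficulty is to upgrade $|r|_{h}$ to the full norm $\Vert r\Vert_{0}$.

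For this upgrade I build an auxiliary velocity $\bm z\in\Vh$ realising a pressure inf--sup, and here the two hypotheses diverge. Under (ii) I decompose $r=r_{0}+(r-r_{0})$ with $r_{0}$ the $L^{2}$-projection onto $\Ph^{0}$; a local Poincar\'e inequality gives $\Vert r-r_{0}\Vert_{0}\lesssim|r|_{h}$, so only $r_{0}$ remains, and \eqref{conststab} supplies $\bm z\in\Vh$ with $(\div\bm z,r_{0})\gtrsim\Vert r_{0}\Vert_{0}^{2}$ and $\Vert\bm z\Vert_{1}\lesssim\Vert r_{0}\Vert_{0}$; no interpolation, and hence no edge terms, occur. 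Under (i) the seminorm is genuinely too weak to capture even the mean-free part of a continuous $r$, so I instead invoke the continuous inf--sup \eqref{clbb} and a Cl\'ement interpolant: choosing $\bmv\in\V$ with $(\div\bmv,r)\gtrsim\Vert r\Vert_{0}^{2}$, $\Vert\bmv\Vert_{1}\lesssim\Vert r\Vert_{0}$ and setting $\bm z=\intv$, I split $(\div\bm z,r)=(\div\bmv,r)+(\div(\intv-\bmv),r)$ and integrate the error elementwise by parts. The interior-edge contributions are weighted by the jumps of $r$ and thus vanish by continuity, leaving a volume term $\lesssim\Vert\bmv\Vert_{1}|r|_{h}$; the boundary terms on $\Gamma_{N}$ and the lone constant mode are absorbed through the Neumann flux. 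In either case one ends with $(\div\bm z,r)\gtrsim\Vert r\Vert_{0}^{2}-C\Vert\bm z\Vert_{1}|r|_{h}$ (with $\Vert r_{0}\Vert_{0}$ in place of $\Vert r\Vert_{0}$ under (ii)) and $\Vert\bm z\Vert_{1}\lesssim\Vert r\Vert_{0}$.

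With $\bm z$ in hand I test with $(\bmv,q)=(\bmw-\delta\bm z,-r)$, $\delta>0$ small. By bilinearity $\B_{h}(\bmw,r;\bmw-\delta\bm z,-r)=\B_{h}(\bmw,r;\bmw,-r)-\delta\,\B_{h}(\bmw,r;\bm z,0)$, and by \eqref{defbh} the second term delivers the favourable contribution $\delta(\div\bm z,r)$, bounded below as in the previous paragraph, together with $-\delta(\D(\bmw),\D(\bm z))$ and the stabilization term $\delta\alpha\Sh(\bmw,r;\bm z,0)$. Estimating these last two by the inverse inequality \eqref{inveq} and Cauchy--Schwarz produces cross terms $\lesssim\delta(\Vert\bmw\Vert_{1}+|r|_{h})\Vert\bm z\Vert_{1}\lesssim\delta(\Vert\bmw\Vert_{1}+|r|_{h})\Vert r\Vert_{0}$, which Young's inequality splits into a small multiple of $\Vert r\Vert_{0}^{2}$ and a multiple of $\Vert\bmw\Vert_{1}^{2}+|r|_{h}^{2}$; fixing $\delta$ small enough, the latter is absorbed into the diagonal estimate of the first paragraph. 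This yields $\B_{h}(\bmw,r;\bmw-\delta\bm z,-r)\gtrsim\Vert\bmw\Vert_{1}^{2}+\Vert r\Vert_{0}^{2}$, while $\Vert\bmw-\delta\bm z\Vert_{1}+\Vert r\Vert_{0}\lesssim\Vert\bmw\Vert_{1}+\Vert r\Vert_{0}$ because $\Vert\bm z\Vert_{1}\lesssim\Vert r\Vert_{0}$, and \eqref{discstab2} follows.

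The main obstacle is squarely this pressure-recovery step, not the routine combination that follows it. The seminorm $|r|_{h}$ cannot by itself control $\Vert r\Vert_{0}$ --- a mean-free continuous pressure may have $|r|_{h}$ of order $h$ yet $\Vert r\Vert_{0}$ of order one --- so a genuine inf--sup is unavoidable, and the whole difficulty lies in the non-conforming pairing of $\div\bm z$ against $r$. Under (i) this pairing generates interior-edge jump terms that must be annihilated, which is precisely the role of the continuity hypothesis $\Ph\subset{\mathcal C}^{0}(\Omega)$; under (ii) no interpolation is performed and the piecewise-constant mode is instead controlled directly by the assumed stability \eqref{conststab} on $\Ph^{0}$. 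The remaining subtlety, common to both, is the isolation of the one-dimensional constant pressure mode, whose stabilization relies on the non-empty Neumann boundary $\Gamma_{N}$.
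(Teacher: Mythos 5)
Your proposal is correct and follows precisely the route the paper itself indicates: the paper only sketches this result, combining the ``diagonal'' estimate \eqref{discstab1} with the Pitk\"aranta--Verf\"urth trick (citing \cite{JPtrick,RVtrick}) to upgrade the mesh-dependent pressure seminorm to the full $L^{2}$-norm, with the case split between continuous pressures and the discrete inf--sup hypothesis \eqref{conststab} exactly as you describe. You fill in the details the paper delegates to the references; the only place your sketch is thinner than it should be is the treatment of the $\Gamma_{N}$ boundary term and the constant pressure mode under hypothesis (i), which you correctly identify as the remaining subtlety but do not fully execute.
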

We emphasize the generality of the formulation. For continuous pressures all elements, triangles, quadrilaterals, tetrahedrons, prisms, hexahedrons and pyramids can be used, and also mixing them is allowed  provided the mesh is conforming. For discontinuous elements the only condition is that the stability estimate \eqref {conststab} is valid. In two dimensions this is true if the local element are  $[P_{2}(K)]^{2}$ and $[Q_{2}(K)]^{2}$ for triangles and quadrilaterals, respectively. In three dimensions the choices $[P_{3}(K)]^{3}$ and $[Q_{2}(K)]^{3}$ are sufficient for tetrahedrons and hexahedrons.

The following error estimate presented in the papers \cite{HFB,Hughes-Franca,FH88,SIAM,Nic} is  a direct consequence of the stability and consistency
\begin{eqnarray*}
 \Vert \bmu -\bmu_{h}\Vert_{1}+\Vert p-p_{h} \Vert_{0}  &\lesssim & 
 \inf_{\bmv \in \Vh} \Big(\Vert \bmu -\bmv\Vert_{1}+\big(\sum_{K\in\Ch}h_{K}^{2}\vert \bmu-\bmv \vert_{2,K}^{2}\big)^{1/2}
 \Big)
 \\
 & & +\inf_{q\in \Ph}\Big( \Vert p-q \Vert_{0}+ \big(\sum_{K\in\Ch}h_{K}^{2}\vert p-q \vert_{1,K}^{2}\big)^{1/2}
 \Big).
 \nonumber 
\end{eqnarray*}
The drawback of this estimate is that it requires that $\bmu\in \bm H^{2}(\Omega)$ and $p\in H^{1}(\Omega)$.
For less regular solutions the convergence was left open in the papers cited above.
In the following  we will amend this situation by using arguments introduced by Gudi~\cite{Gudi}. 

\section{A refined a priori error analysis}
First, we recall results from a posteriori error analysis \cite{RV,Vbook}. For an edge or face $E$ in the mesh, we denote by $\omega_{E}$ the union of all elements in $\Ch$ having $E$ as an edge or  a face. We define $\osckf$  by
\begin{equation}
\osckf=h_{K}\Vert \bmf -\bmf_{h}\Vert_{0,K},
\end{equation}
where $\bmf_{h}\in \Vh$ is the $L_{2}$-projection of $\bmf$. Similarly, we define 
\begin{equation}
\oscet= h_{E}^{1/2}\Vert \bmt -\bmt_{h}\Vert_{0,E},
\end{equation}
with $\bmt_{h}\in \Vh\vert_{\Gamma_{N}}$ being the $L_{2}$-projection. The global oscillation terms are defined  through

 \begin{equation}
\oscf^{2}=\sum_{K\in \Ch}\osckf^{2} \quad \mbox { and } \quad 
\osct^{2}=\sum_{E\subset \Gamma_{N}}\oscet^{2}.
\end{equation}
For an edge or face $E=\partial K\cap \partial K'$ the jump in the normal traction is
\begin{equation}
[\! [ (\D(\bmv)-q\bmI\big)\bmn]\!]\vert _E=   (\D(\bmv)-q\bmI\big)\big\vert_{K} {\bmn_{K}} -(\D(\bmv)-q\bmI\big)\big\vert_{K'}\bmn_{K'} .
\end{equation}
The following lower bounds are proved in \cite{RV,Vbook}.

\begin{lemma}\label{lowbound}
For all $ (\bmv,q)\in \Vh\times \Ph$ it  holds:
\begin{equation}\label{lowb1}
h_{K}\Vert \A \bmv-\nabla q + \bmf \Vert_{0, K }\lesssim \Vert \D (\bmu -\bm v)\Vert_{0,K}+ \Vert  p -q\Vert_{0,K}+\osckf
\quad \forall K\in\Ch.
\end{equation}
For $E$ in the interior of $\Omega$
\begin{equation}
h_{E}^{1/2}  \big\Vert   [\! [ (\D(\bmv)-q\bmI\big)\bmn]\!] \big\Vert_{0,E} \lesssim 
\Vert \D (\bmu -\bm v)\Vert_{0,\omega_{E}}+ \Vert  p -q\Vert_{0,\omega_{E}}+\sum_{K\subset \omega_{E}}\osckf
\end{equation}
and for $E\subset \Gamma_{N}$
\begin{eqnarray}
h_{E}^{1/2}  \big\Vert   (\D(\bmv)-q\bmI\big)\bmn-\bmt \big\Vert_{0,E} \lesssim 
 & & 
\Vert \D (\bmu -\bm v)\Vert_{0,\omega_{E}}+ \Vert  p -q\Vert_{0,\omega_{E}}
\\ && +\sum_{K\subset \omega_{E}}\osckf  +
\oscet.
\end{eqnarray}
\end{lemma}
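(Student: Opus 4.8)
The plan is to use the standard residual/bubble-function technique of Verf\"urth. For each of the three bounds I construct an appropriate bubble function, test the residual against a bubble-weighted copy of itself, integrate by parts, invoke the weak form \eqref{contprob} of the continuous problem to trade the data against the error, and finally close the estimate with inverse (bubble) estimates. Throughout, shape regularity is used to identify $h_{E}\sim h_{K}$ for a face $E\subset\partial K$.

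For the element estimate \eqref{lowb1} I would first replace $\bmf$ by its $L^{2}$-projection $\bmf_{h}$, so that $r_{K}:=\A\bmv-\nabla q+\bmf_{h}$ is a polynomial on $K$; the triangle inequality then gives $h_{K}\Vert\A\bmv-\nabla q+\bmf\Vert_{0,K}\le h_{K}\Vert r_{K}\Vert_{0,K}+\osckf$, so it suffices to bound $h_{K}\Vert r_{K}\Vert_{0,K}$. Let $b_{K}$ denote the interior element bubble and set $\bmw_{K}=b_{K}r_{K}$, extended by zero so that $\bmw_{K}\in\V$. The polynomial norm equivalence gives $\Vert r_{K}\Vert_{0,K}^{2}\lesssim(r_{K},\bmw_{K})_{K}$. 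Since $\A=\mathbf{div}\,\D$ and $\bmw_{K}$ vanishes on $\partial K$, integration by parts turns $(\A\bmv,\bmw_{K})_{K}$ into $-(\D(\bmv),\D(\bmw_{K}))_{K}$ and $(-\nabla q,\bmw_{K})_{K}$ into $(q,\div\bmw_{K})_{K}$; writing $\bmf_{h}=\bmf+(\bmf_{h}-\bmf)$ and using \eqref{contprob} tested with $(\bmw_{K},\bm 0)$, whose support is interior so that the $\Gamma_{N}$-term drops, to rewrite $(\bmf,\bmw_{K})_{K}=(\D(\bmu),\D(\bmw_{K}))_{K}-(p,\div\bmw_{K})_{K}$, the data cancels and I am left with
\[
(r_{K},\bmw_{K})_{K}=(\D(\bmu-\bmv),\D(\bmw_{K}))_{K}-(p-q,\div\bmw_{K})_{K}+(\bmf_{h}-\bmf,\bmw_{K})_{K}.
\]
Cauchy--Schwarz, the inverse estimates $\Vert\D(\bmw_{K})\Vert_{0,K}+\Vert\div\bmw_{K}\Vert_{0,K}\lesssim h_{K}^{-1}\Vert r_{K}\Vert_{0,K}$ and $\Vert\bmw_{K}\Vert_{0,K}\lesssim\Vert r_{K}\Vert_{0,K}$, division by $\Vert r_{K}\Vert_{0,K}$, and multiplication by $h_{K}$ then yield \eqref{lowb1}.

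For the interior face $E$ I would repeat the argument with the face bubble $b_{E}$ supported on the patch $\omega_{E}$: testing a polynomial extension of the jump against $\bmw_{E}=b_{E}[\![(\D(\bmv)-q\bmI)\bmn]\!]$ and integrating by parts over the two elements forming $\omega_{E}$ reproduces the jump on $E$ as the interface boundary contribution, while the volume terms produce exactly the element residuals together with the error $\D(\bmu-\bmv)$ and $p-q$. Inverse estimates scaled by $h_{E}$ then bound the jump by the error plus the element residuals, and the latter are absorbed using the already-proved bound \eqref{lowb1}, which is where the oscillation sum $\sum_{K\subset\omega_{E}}\osckf$ enters. The Neumann bound is the same computation on the single element touching $E\subset\Gamma_{N}$: now one tests against $(\D(\bmv)-q\bmI)\bmn-\bmt_{h}$ and keeps the $\langle\bmt,\cdot\rangle_{\Gamma_{N}}$ contribution from \eqref{contprob}, the replacement of $\bmt$ by $\bmt_{h}$ producing the term $\oscet$. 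I expect the jump estimates to be the main obstacle, since one must arrange the extension and the two-element integration by parts so that the volume remainders are precisely the already-controlled element residuals, and must track the $h_{E}\sim h_{K}$ scalings so that these residual contributions can be absorbed rather than accumulate.
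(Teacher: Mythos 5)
The paper gives no proof of this lemma, deferring to Verf\"urth's works \cite{RV,Vbook}, and your bubble-function argument is precisely the residual technique used there: element bubble for \eqref{lowb1} after swapping $\bmf$ for $\bmf_h$, face bubbles on $\omega_E$ for the jump and Neumann terms, with the element residuals absorbed via the already-proved bound and the data replacements producing the oscillation terms. The details you spell out for the element estimate (integration by parts against $b_Kr_K$, cancellation via the weak form tested with $(\bmw_K,\bm 0)$, and the inverse estimates) are correct, so this matches the approach the paper relies on.
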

Now we state the new error estimate. Note that   $\oscf$ is a higher order term.
 
 \begin{theorem} It holds
 \begin{equation}
 \Vert \bmu -\bmu_{h}\Vert_{1}+\Vert p-p_{h} \Vert_{0} \lesssim  
 \inf_{\bmv \in \Vh} \Vert \bmu -\bmv\Vert_{1}+\inf_{q\in \Ph}\Vert p-q \Vert_{0}+\oscf  .  
 \end{equation}
  \end{theorem}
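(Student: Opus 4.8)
The plan is to run the standard stability-plus-consistency argument of mixed methods, but to handle the stabilization contribution of the exact solution through the momentum equation rather than through its (possibly nonexistent) second derivatives. Fix arbitrary $(\intv,\intq)\in\Vh\times\Ph$ and measure the error through the fully discrete pair $(\intv-\bmu_h,\intq-p_h)\in\Vh\times\Ph$. Applying the discrete stability estimate \eqref{discstab2} to this pair, and then using the discrete equation \eqref{discprob} together with the consistency \eqref{consistency} and the bilinearity of $\B_{h}$, I would reduce matters to estimating $\B_{h}(\intv-\bmu,\intq-p;\bmv,q)$ uniformly over test functions $(\bmv,q)\in\Vh\times\Ph$ normalized by $\Vert\bmv\Vert_1+\Vert q\Vert_0=1$.

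Next I would split $\B_{h}=\B-\alpha\Sh$. The Galerkin part $\B(\intv-\bmu,\intq-p;\bmv,q)$ is immediately controlled by Cauchy--Schwarz and Korn's inequality \eqref{korn}, producing the approximation terms $\Vert\bmu-\intv\Vert_1+\Vert p-\intq\Vert_0$. The delicate term is the stabilization part $\Sh(\intv-\bmu,\intq-p;\bmv,q)$, which literally contains $\A\bmu$ and is therefore not meaningful when $\bmu\notin\bm H^2(\Omega)$. This is the main obstacle, and it is resolved exactly in the spirit of Gudi's approach: by Lemma \ref{s=l}, $\Sh(\bmu,p;\bmv,q)=\Lh(\bmv,q)$ whenever $\bmf\in\bm L_2(\Omega)$, so the inaccessible exact-solution contribution can be traded for a term involving only $\bmf$. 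Concretely, $\Sh(\intv-\bmu,\intq-p;\bmv,q)=\sum_{K\in\Ch}h_K^2(-\A\intv+\nabla\intq-\bmf,-\A\bmv+\nabla q)_K$, an expression in which every factor lies in $\bm L_2$.

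It then remains to bound this sum by Cauchy--Schwarz. For the test-function factor, the inverse inequality \eqref{inveq} (and its standard analogue $h_K\Vert\nabla q\Vert_{0,K}\lesssim\Vert q\Vert_{0,K}$) gives $\sum_{K}h_K^2\Vert-\A\bmv+\nabla q\Vert_{0,K}^2\lesssim\Vert\bmv\Vert_1^2+\Vert q\Vert_0^2$. For the remaining factor I would invoke the a posteriori lower bound \eqref{lowb1} with $(\bmv,q)$ replaced by $(\intv,\intq)$, which yields precisely $h_K\Vert\A\intv-\nabla\intq+\bmf\Vert_{0,K}\lesssim\Vert\D(\bmu-\intv)\Vert_{0,K}+\Vert p-\intq\Vert_{0,K}+\osckf$; summing over $K$ and using $\Vert\D(\cdot)\Vert_0\le\Vert\cdot\Vert_1$ bounds this factor by $\Vert\bmu-\intv\Vert_1+\Vert p-\intq\Vert_0+\oscf$. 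Collecting the two estimates controls $\B_{h}(\intv-\bmu,\intq-p;\bmv,q)$ by $(\Vert\bmu-\intv\Vert_1+\Vert p-\intq\Vert_0+\oscf)(\Vert\bmv\Vert_1+\Vert q\Vert_0)$.

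Finally, the stability bound gives $\Vert\intv-\bmu_h\Vert_1+\Vert\intq-p_h\Vert_0\lesssim\Vert\bmu-\intv\Vert_1+\Vert p-\intq\Vert_0+\oscf$, and a triangle inequality together with taking the infima over $\intv\in\Vh$ and $\intq\in\Ph$ separately (the right-hand side splits additively into a velocity and a pressure part) yields the claimed estimate. The only genuine subtlety is the substitution via Lemma \ref{s=l}, which replaces the unavailable $\A\bmu$ by $\bmf$ and lets the a posteriori residual bound \eqref{lowb1} supply the estimate that would otherwise have forced an $\bm H^2$-regularity assumption; the term $\oscf$ is the price paid for approximating $\bmf$ and, being of higher order, does not degrade the rate.
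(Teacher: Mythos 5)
Your proposal is correct and follows essentially the same route as the paper: discrete stability applied to the difference between the discrete solution and an arbitrary interpolant, reduction of the stabilization term to $\sum_{K}h_{K}^{2}(-\A\intv+\nabla\intq-\bmf,-\A\bmv+\nabla q)_{K}$ so that $\A\bmu$ never appears, Cauchy--Schwarz with an inverse inequality on the test-function factor, and the a posteriori lower bound \eqref{lowb1} on the residual factor. The only (immaterial) difference is bookkeeping: you invoke the consistency identity and Lemma \ref{s=l} explicitly, whereas the paper reaches the same expression by unwinding $\Fh=\F-\alpha\Lh$ and the continuous problem \eqref{contprob} directly.
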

 \begin{proof} Let $ (\bmv,q)\in \Vh\times \Ph$ be arbitrary. By the stability  estimate \eqref{discstab2} there exists $ (\bmw,r)\in \Vh\times \Ph$
 with 
 \begin{equation}\label{norma}
 \Vert \bmw \Vert_{1}+\Vert r\Vert_{0}=1
 \end{equation}
 and 
\begin{equation}\label{4.11}
\Vert \bmu_{h} -\bmv \Vert_{1}+\Vert p_{h}-q\Vert_{0}\lesssim 
\B_{h}(\bmu_{h}-\bmv ,p_{h}-q;\bmw,r).
\end{equation}  
Using  \eqref{discprob}, \eqref{deffh}, \eqref{contprob} and \eqref{defbh} yield
\begin{eqnarray}\label{est1}\nonumber
  \B_{h}(\bmu_{h}-\bmv ,p_{h}-q;\bmw,r)&= &
\B_{h}(\bmu_{h} ,p_{h},\bmw,r)-\B_{h}(\bmv ,q;\bmw,r)
\\&= & \nonumber
\Fh(\bmw,r)-\B_{h}(\bmv ,q;\bmw,r)
\\&=&
\F(\bmw,r)-\alpha\Lh(\bmw,r)-\B_{h}(\bmv ,q;\bmw,r)
\\&=&\nonumber \B(\bmu  ,p ;\bmw,r)-\alpha \Lh(\bmw,r)-\B_{h}(\bmv ,q;\bmw,r)
\\&=& \nonumber\B(\bmu  ,p;\bmw,r)-\alpha \Lh(\bmw,r)-\B(\bmv ,q;\bmw,r)
+\alpha \Sh(\bmv,q;\bmw,r)
\\&=& \B(\bmu  -\bmv,p-q ;\bmw,r)+\alpha \big(\Sh(\bmv,q;\bmw,r)-\Lh(\bmw,r) \big).\nonumber
\end{eqnarray}
From the boundedness of the bilinear form $\B$ and the normalization \eqref{norma}, we have
\begin{equation}\label{est2}
\B(\bmu  -\bmv,p-q ;\bmw,r)\lesssim
\big(
 \Vert \bmu -\bmv\Vert_{1}+ \Vert p-q \Vert_{0}\big).\end{equation}
 From the definitions \eqref{defsh} and \eqref{deflh}  we have
 \begin{equation}
 \Sh(\bmv,q;\bmw,r)-\Lh(\bmw,r)=\sum_{K\in \Ch}h_{K}^{2}(  - \A \bmv+\nabla q-\bmf, - \A \bmw+\nabla r)_{K}.
 \end{equation}
Cauchy--Schwarz   inequality then yields
  \begin{eqnarray*}\nonumber
&&\hskip -10 pt\vert \Sh(\bmv,q;\bmw,r)-\Lh(\bmw,r)\vert
\\&&\leq \Big(\sum_{K\in \Ch}h_{K}^{2}\Vert  - \A \bmv+\nabla q-\bmf\Vert_{0,K}^{2}\Big)^{1/2}
\Big(\sum_{K\in \Ch}h_{K}^{2}\Vert - \A \bmw+\nabla r\Vert_{0,K}^{2}\Big)^{1/2}
.
 \end{eqnarray*}
 By local inverse inequalities we have
  \begin{eqnarray*}\nonumber
\Big(\sum_{K\in \Ch}h_{K}^{2}\Vert - \A \bmw+\nabla r\Vert_{0,K}^{2}\Big)^{1/2}
&\leq& \Big(2\sum_{K\in \Ch}h_{K}^{2}\big(\Vert   \A \bmw \Vert_{0,K}^{2}+\Vert\nabla r\Vert_{0,K}^{2} \big)\Big)^{1/2}
\\&
\lesssim &( \Vert \bmw \Vert_{1}+\Vert r\Vert_{0}).
 \end{eqnarray*}
 Hence, \eqref{lowb1} gives
 \begin{eqnarray*}\label{est3}
\vert \Sh(\bmv,q;\bmw,r)-\Lh(\bmw,r)\vert&\lesssim &\Big(\sum_{K\in \Ch}h_{K}^{2}\Vert  - \A \bmv+\nabla q-\bmf\Vert_{0,K}^{2}\Big)^{1/2}
\\
 &\lesssim&
 \big(
 \Vert \bmu -\bmv\Vert_{1}+ \Vert p-q \Vert_{0}+\oscf\big)
. \nonumber
 \end{eqnarray*}
 The assertion now follows from \eqref{4.11}, \eqref{est1}, \eqref{est2} and \eqref{est3}.
\end{proof}
\begin{remark} The above estimates are also valid for the Douglas--Wang formulation {\rm \cite{Douglas-Wang}}, provided that 
the stabilizing term of inter-element pressure jumps  is dropped.

\end{remark}
\section{A posteriori estimates} For the a posteriori estimates we define the local    estimators
\begin{equation}
\eta_{K}^{2}= h_{K}^{2}\Vert \A \bmu_{h}-\nabla p_{h} + \bmf \Vert_{0, K }^{2}+\Vert \div \bmu_{h}-\bmg\Vert _{0,K}^{2}
\end{equation}
and
\begin{equation}
\eta_{E}^{2}=  \left\{\begin{array}{ll}
h_{E}   \big\Vert   [\! [ (\D(\bmu _{h})-p_{h}\bmI\big)\bmn]\!] \big\Vert_{0,E} ^{2}, \  \mbox{ when } E\subset \Omega,
\\
h_{E}  \big\Vert     (\D(\bmu _{h})-p_{h}\bmI\big)\bmn-\bmt\big\Vert_{0,E} ^{2}, \  \mbox{ when } E\subset \Gamma_{N}.
\end{array} \right.
\end{equation}
By $\Eh$ we denote the collection of edges/faces in $\Omega$ and on $\Gamma_{N} $. The global error estimator is then defined as
\begin{equation}
\eta^2= \sum_{K\in \Ch} \eta_K^2 +\sum_{E\in \Eh} \eta_E^2.
\end{equation}
Taking  $(\bmv,q)=(\bmu_{h},p_{h})$ in Lemma \ref{lowbound}  yields a local lower bound for the error. Now we will prove  the following upper bound.
\begin{theorem} It holds
 \begin{equation}
\big( \Vert \bmu -\bmu_{h}\Vert_{1}+\Vert p-p_{h} \Vert_{0}\big) \lesssim \eta.
 \end{equation} 
 \end{theorem}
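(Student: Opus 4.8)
The plan is to prove this reliability bound in the standard residual-based way: use the continuous stability estimate \eqref{contstab} to convert the error norm into a residual, and then a quasi-interpolation plus integration-by-parts argument to identify that residual with the element and edge quantities making up $\eta$. Since $\bmu-\bmu_{h}\in\V$ and $p-p_{h}\in\P$, applying \eqref{contstab} to $(\bmw,r)=(\bmu-\bmu_{h},\,p-p_{h})$ produces a pair $(\bmv,q)\in\V\times\P$, which I normalize by $\Vert\bmv\Vert_{1}+\Vert q\Vert_{0}=1$, such that
\[
\Vert \bmu -\bmu_{h}\Vert_{1}+\Vert p-p_{h} \Vert_{0}\lesssim \B(\bmu-\bmu_{h},p-p_{h};\bmv,q)=\F(\bmv,q)-\B(\bmu_{h},p_{h};\bmv,q),
\]
where the last step uses the continuous problem \eqref{contprob}. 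It then suffices to bound this residual by $\eta$.

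First I would introduce a Cl\'ement/Scott--Zhang quasi-interpolant $\intv\in\Vh$ of $\bmv$ that preserves the homogeneous condition on $\Gamma_{D}$ and satisfies the local estimates $\Vert\bmv-\intv\Vert_{0,K}\lesssim h_{K}\Vert\bmv\Vert_{1,\omega_{K}}$ and $\Vert\bmv-\intv\Vert_{0,E}\lesssim h_{E}^{1/2}\Vert\bmv\Vert_{1,\omega_{E}}$, the patches $\omega_{K},\omega_{E}$ having bounded overlap. Writing $\bmv=(\bmv-\intv)+\intv$ and using linearity, the residual splits as
\[
\F(\bmv,q)-\B(\bmu_{h},p_{h};\bmv,q)=\big[\F(\bmv-\intv,q)-\B(\bmu_{h},p_{h};\bmv-\intv,q)\big]+\big[\F(\intv,0)-\B(\bmu_{h},p_{h};\intv,0)\big].
\]
The second bracket is where the stabilization enters: testing \eqref{discprob} with $(\intv,0)\in\Vh\times\Ph$ and cancelling the $\B,\F$ parts by means of \eqref{defbh} and \eqref{deffh} gives $\F(\intv,0)-\B(\bmu_{h},p_{h};\intv,0)=\alpha\big(\Lh(\intv,0)-\Sh(\bmu_{h},p_{h};\intv,0)\big)=\alpha\sum_{K\in\Ch}h_{K}^{2}(\bmf+\A\bmu_{h}-\nabla p_{h},\,-\A\intv)_{K}$. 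By Cauchy--Schwarz, the inverse inequality $\sum_{K}h_{K}^{2}\Vert\A\intv\Vert_{0,K}^{2}\lesssim\Vert\intv\Vert_{1}^{2}\lesssim1$, and the definition of $\eta_{K}$, this term is $\lesssim\eta$.

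The main work is the first bracket. Collecting the pressure test terms gives the continuity residual $(\div\bmu_{h}-\bmg,q)\le\Vert\div\bmu_{h}-\bmg\Vert_{0}\Vert q\Vert_{0}\le\eta$, since $\Vert q\Vert_{0}\le1$. For the velocity test function $\bm{e}:=\bmv-\intv$ I integrate $(\D\bmu_{h},\D\bm{e})_{K}$ and $(\div\bm{e},p_{h})_{K}$ by parts on each element, using $\A=\mathbf{div}\,\D$, to obtain
\[
\sum_{K\in\Ch}(\bmf+\A\bmu_{h}-\nabla p_{h},\bm{e})_{K}-\sum_{K\in\Ch}\langle(\D(\bmu_{h})-p_{h}\bmI)\bmn,\bm{e}\rangle_{\partial K}+\langle\bmt,\bm{e}\rangle_{\Gamma_{N}}.
\]
Reassembling the boundary integrals edge/face-wise, the interior contributions combine into the normal-traction jumps $[\![(\D(\bmu_{h})-p_{h}\bmI)\bmn]\!]$ (since $\bm{e}$ is single-valued across interior edges), the $\Gamma_{N}$ contributions combine with $\bmt$ into $(\D(\bmu_{h})-p_{h}\bmI)\bmn-\bmt$, and the $\Gamma_{D}$ contributions vanish because $\bm{e}=\bm0$ there; orientation is irrelevant, as everything is bounded in absolute value. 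Applying Cauchy--Schwarz elementwise and edgewise together with the interpolation and trace estimates for $\bm{e}$, and then a discrete Cauchy--Schwarz over $\Ch$ and $\Eh$, bounds each piece by the corresponding part of $\eta$ (the factor $h_{K}$, resp. $h_{E}^{1/2}$, supplied by the interpolation error matches exactly the weights in $\eta_{K}$ and $\eta_{E}$), together with $\Vert\bmv\Vert_{1}\le1$. Collecting the three contributions gives the residual $\lesssim\eta$, hence the claim.

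The hard part is purely the bookkeeping in this last paragraph: performing the element-wise integration by parts and correctly reassembling the element-boundary integrals into interior jumps and the Neumann residual, together with insisting that the quasi-interpolant reproduce the homogeneous Dirichlet data so that the $\Gamma_{D}$ terms drop. The two appeals to stability and the Cauchy--Schwarz steps are routine once the residual is written in this form; the insertion of $\intv$ via the discrete equation is essential, since it is the interpolation error $\bm{e}$---not $\bmv$ itself---that supplies the mesh weights matching $\eta$.
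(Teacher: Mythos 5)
Your proof is correct and follows essentially the same route as the paper: continuous inf-sup stability to represent the error as a residual, a Cl\'ement interpolant inserted via the discrete equation (equivalently, Galerkin orthogonality tested with $(\intv,0)$), elementwise integration by parts for the $\bmv-\intv$ part, and Cauchy--Schwarz with an inverse inequality for the stabilization term. The only difference is that you spell out the integration-by-parts bookkeeping that the paper delegates to the standard references.
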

\begin{proof} By the stability of the continuous problem \eqref{contstab},
there exists $ (\bmv,q)\in \V\times \P$
 with 
 \begin{equation}\label{stabnorma}
 \Vert \bmv \Vert_{1}+\Vert q\Vert_{0}=1
 \end{equation}
 and 
\begin{equation}\label{x}
\Vert\bmu- \bmu_{h}   \Vert_{1}+\Vert p- p_{h}\Vert_{0}
\lesssim 
\B (\bmu-\bmu_{h}  ,p-p_{h};\bmv,q).
\end{equation}  
Let $\intv \in \Vh $ be the Cl\'ement interpolant \cite{cle} of $\bmv$ for which we have the estimate\begin{equation}\label{clement}
\big( \sum_{K\in \Ch}h_{K}^{-2}\Vert \bmv-\intv \Vert_{0,K}^{2}+\sum_{E\in \Eh}h_{E}^{-1}\Vert \bmv-\intv \Vert_{0,E}^{2}\big)^{1/2}\lesssim \Vert \bmv \Vert_{1}\lesssim 1. \end{equation}
Choosing the pair $(\bmv,q)=(\intv,0)$ in the finite element formulation \eqref{discprob} and the consistency equation 
\eqref{consistency}, we get
\begin{equation}
\B_{h}(\bmu-\bmu_{h},p-p_{h},\intv,0)=0.
\end{equation}
Subtracting this from the right hand side in \eqref{x}, and using the definition of $\B_{h}$, we obtain
\begin{eqnarray}
\lefteqn{\B (\bmu-\bmu_{h}  ,p-p_{h};\bmv,q)
\nonumber= \B (\bmu-\bmu_{h}  ,p-p_{h};\bmv,q)-\B_{h}(\bmu-\bmu_{h},p-p_{h},\intv,0)}
\nonumber\\&=& \B (\bmu-\bmu_{h}  ,p-p_{h};\bmv-\intv,q)-
\  \alpha   \Sh(\bmu-\bmu_{h},p-p_{h};\intv,0).
\label{upp-1}
\end{eqnarray}
The first term above is estimated exactly as in the analysis of the standard mixed method \cite{RV,Vbook}, using  element by element integration by parts and the interpolation estimate \eqref{clement}. This results in 
\begin{equation}\label{upp1}  
\B (\bmu-\bmu_{h}   ,p-p_{h};\bmv-\intv,q)\lesssim \eta.
\end{equation}
Recalling   definition \eqref{defsh},  equation \eqref{momeq}, and using an inverse inequality together with  estimate 
\eqref{clement}, we get 
\begin{eqnarray*}\label{upp2}
\nonumber 
\big \vert\Sh(\bmu-\bmu_{h},p-p_{h};\intv,0)\big \vert &=&\big\vert \sum_{K\in \Ch}h_{K}^{2}( \bmf  + \A \bmu_{h}-\nabla p_{h}, - \A \tilde \bmv)_{K}\big \vert 
\\ & \leq & 
\big( \sum_{K\in \Ch}h_{K}^{2}\Vert  \bmf  + \A \bmu_{h}-\nabla p_{h}\Vert_{0,K} \big)^{1/2} 
\big( \sum_{K\in \Ch}h_{K}^{2}\Vert \A \tilde \bmv \Vert_{0,K} \big)^{1/2} 
\\ &\lesssim &\eta \Vert \tilde \bmv \Vert_{1}\lesssim \eta.
\nonumber
\end{eqnarray*}
The assertion now follows by combining  the above estimates.
\end{proof}

\begin{remark}
Let us finally note  that previous works on the a posteriori estimates for stabilized methods have mostly been confined to low order methods or to methods with stabilizing pressure jump terms, cf. 
{\rm \cite{Voh,Kay, Lee-Kim,Song-Cai,Zheng,Wang-Wang}}. 
As mentioned in the introduction, our estimates are the same as in {\rm \cite{Wang-Wang}}, but our analysis is more straightforward.
\end{remark}


\begin{thebibliography}{10}

\bibitem{bbf}
{\sc Daniele Boffi, Franco Brezzi, and Michel Fortin}, {\em Mixed finite
  element methods and applications}, vol.~44 of Springer Series in
  Computational Mathematics, Springer, Heidelberg, 2013.

\bibitem{cle}
{\sc Ph. Cl{\'e}ment}, {\em Approximation by finite element functions using
  local regularization}, RAIRO Analyse Num\'erique, 9 (1975), pp.~77--84.

\bibitem{Douglas-Wang}
{\sc Jim Douglas, Jr. and Jun~Ping Wang}, {\em An absolutely stabilized finite
  element method for the {S}tokes problem}, Math. Comp., 52 (1989),
  pp.~495--508.

\bibitem{FH88}
{\sc Leopoldo~P. Franca and Thomas J.~R. Hughes}, {\em Two classes of mixed
  finite element methods}, Comput. Methods Appl. Mech. Engrg., 69 (1988),
  pp.~89--129.

\bibitem{Nic}
{\sc Leopoldo~P. Franca, Thomas J.~R. Hughes, and Rolf Stenberg}, {\em
  Stabilized finite element methods}, in Incompressible computational fluid
  dynamics: trends and advances, Cambridge Univ. Press, Cambridge, 2008,
  pp.~87--107.

\bibitem{SIAM}
{\sc Leopoldo~P. Franca and Rolf Stenberg}, {\em Error analysis of {G}alerkin
  least squares methods for the elasticity equations}, SIAM J. Numer. Anal., 28
  (1991), pp.~1680--1697.

\bibitem{Gudi}
{\sc Thirupathi Gudi}, {\em A new error analysis for discontinuous finite
  element methods for linear elliptic problems}, Math. Comp., 79 (2010),
  pp.~2169--2189.

\bibitem{Voh}
{\sc Antti Hannukainen, Rolf Stenberg, and Martin Vohral{\'{\i}}k}, {\em A
  unified framework for a posteriori error estimation for the {S}tokes
  problem}, Numer. Math., 122 (2012), pp.~725--769.

\bibitem{Hughes-Franca}
{\sc Thomas J.~R. Hughes and Leopoldo~P. Franca}, {\em A new finite element
  formulation for computational fluid dynamics. {VII}. {T}he {S}tokes problem
  with various well-posed boundary conditions: symmetric formulations that
  converge for all velocity/pressure spaces}, Comput. Methods Appl. Mech.
  Engrg., 65 (1987), pp.~85--96.

\bibitem{HFB}
{\sc Thomas J.~R. Hughes, Leopoldo~P. Franca, and Marc Balestra}, {\em A new
  finite element formulation for computational fluid dynamics. {V}.
  {C}ircumventing the {B}abu{\v{s}}ka-{B}rezzi condition: a stable
  {P}etrov-{G}alerkin formulation of the {S}tokes problem accommodating
  equal-order interpolations}, Comput. Methods Appl. Mech. Engrg., 59 (1986),
  pp.~85--99.

\bibitem{Kay}
{\sc David Kay and David Silvester}, {\em A posteriori error estimation for
  stabilized mixed approximations of the {S}tokes equations}, SIAM J. Sci.
  Comput., 21 (1999/00), pp.~1321--1336.

\bibitem{Lee-Kim}
{\sc Hyung-Chun Lee and Kwang-Yeon Kim}, {\em A posteriori error estimators for
  stabilized {$P1$} nonconforming approximation of the {S}tokes problem},
  Comput. Methods Appl. Mech. Engrg., 199 (2010), pp.~2903--2912.

\bibitem{JPtrick}
{\sc Juhani Pitk{\"a}ranta}, {\em Boundary subspaces for the finite element
  method with {L}agrange multipliers}, Numer. Math., 33 (1979), pp.~273--289.

\bibitem{Song-Cai}
{\sc Lina Song, Yanren Hou, and Zhiqiang Cai}, {\em Recovery-based error
  estimator for stabilized finite element methods for the {S}tokes equation},
  Comput. Methods Appl. Mech. Engrg., 272 (2014), pp.~1--16.

\bibitem{RVtrick}
{\sc R{\"u}diger Verf{\"u}rth}, {\em Error estimates for a mixed finite element
  approximation of the {S}tokes equations}, RAIRO Anal. Num\'er., 18 (1984),
  pp.~175--182.

\bibitem{RV}
\leavevmode\vrule height 2pt depth -1.6pt width 23pt, {\em A posteriori error
  estimators for the {S}tokes equations}, Numer. Math., 55 (1989),
  pp.~309--325.

\bibitem{Vbook}
\leavevmode\vrule height 2pt depth -1.6pt width 23pt, {\em A posteriori error
  estimation techniques for finite element methods}, Numerical Mathematics and
  Scientific Computation, Oxford University Press, Oxford, 2013.

\bibitem{Wang-Wang}
{\sc Junping Wang, Yanqiu Wang, and Xiu Ye}, {\em Unified a posteriori error
  estimator for finite element methods for the {S}tokes equations}, Int. J.
  Numer. Anal. Model., 10 (2013), pp.~551--570.

\bibitem{Zheng}
{\sc Haibiao Zheng, Yanren Hou, and Feng Shi}, {\em A posteriori error
  estimates of stabilization of low-order mixed finite elements for
  incompressible flow}, SIAM J. Sci. Comput., 32 (2010), pp.~1346--1360.

\end{thebibliography}
\end{document}